\documentclass[12pt,a4paper]{amsart}
\usepackage{amssymb}
\usepackage{tikz}

\headheight0cm
    \headsep22pt
    \textheight23cm
    \topmargin-.5cm
    \oddsidemargin 0cm
    \evensidemargin0cm
    \textwidth16.2cm
    \parindent=1.5\baselineskip
\parskip=15\baselineskip
\setlength{\parskip}{1ex plus 0.5ex minus 0.2ex}

\setlength{\marginparwidth}{2cm}

\newtheorem{thm}{Theorem}
\newtheorem{dfn}{Definition}
\theoremstyle{definition}
\newtheorem{exm}{Example}

\newcommand{\mC}{{\mathbb{C}}}

\title[The cocycle condition for multi-pullbacks of algebras]{THE COCYCLE CONDITION\\ FOR MULTI-PULLBACKS OF ALGEBRAS}
\vspace*{-25mm}

\author{Piotr~M.~Hajac}
\address{Instytut Matematyczny,
Polska Akademia Nauk,
ul.~\'Sniadeckich 8, Warszawa, 00-956 Poland\\
Katedra Metod Matematycznych Fizyki,
Uniwersytet Warszawski,
ul. Ho\.za 74, Warszawa, 00-682 Poland}
\email{http://www.impan.pl/\~{}pmh}

\author{Bartosz~Zieli\'nski}
\address{Department of Theoretical Physics and Computer Science, University of \L{}\'od\'z,
Pomorska 149/153 90-236 \L{}\'od\'z, Poland}
\email{bzielinski@uni.lodz.pl}

\begin{document}
\baselineskip=14.5pt

\begin{abstract}
Take finitely many topological spaces and for each pair of these spaces
choose a pair of corresponding closed subspaces that are identified
by a homeomorpism. We note that this gluing procedure does not guarantee
that the building pieces, or the gluings of some pieces, are embedded in the  space obtained by putting together all given ingredients.
Dually, 
 we show that a certain sufficient condition, called
the cocycle condition, is also necessary  
to guarantee sheaf-like properties of surjective multi-pullbacks of algebras with distributive lattices of 
ideals.
\end{abstract}

\maketitle

\noindent
When constructing a topological space as the gluing of pieces, it is desirable that the parts are embedded into the described space.
The gluing of  three intervals $I_1\cong I_2\cong I_3\cong [-1,1]$ 
into the space $T_*$ described by Fig.~\ref{fig0}
fails this property as the endpoints of $I_2$ and $I_3$ are glued  into a single point:
\begin{figure}[!htp]
\begin{center}
\begin{tabular}{ccc}
\begin{tikzpicture}[scale=0.8]
\draw[dashed,very thin,red] (0,0) circle (5mm);
\draw[color=white,very thick] (50:5mm) arc (50:130:5mm);
\draw[dashed,very thin,red] (50:20mm)--(130:5mm);
\draw[dashed,very thin,red] (50:5mm)--(130:20mm);
\draw (0,-5mm) -- (0,-20mm);
\draw (50:5mm) -- (50:20mm);
\draw (130:5mm) -- (130:20mm);
\fill[red] (0,-5mm) circle (0.5mm);
\fill[red] (50:5mm) circle (0.5mm);
\fill[red] (130:5mm) circle (0.5mm);
\fill[red] (50:20mm) circle (0.5mm);
\fill[red] (130:20mm) circle (0.5mm);
\draw (0,-5mm) node[anchor=north west]{$\scriptstyle1$};
\draw (0,-20mm) node[anchor=west]{$\scriptstyle-1$};
\draw (0,-13mm) node[anchor=east]{$\scriptstyle I_1$};
\draw (130:5mm) node[anchor=east]{$\scriptstyle1$};
\draw (130:20mm) node[anchor=east]{$\scriptstyle-1$};
\draw (130:13mm) node[anchor=north east]{$\scriptstyle I_2$};
\draw (50:5mm) node[anchor=west]{$\scriptstyle1$};
\draw (50:20mm) node[anchor=west]{$\scriptstyle-1$};
\draw (50:13mm) node[anchor=north west]{$\scriptstyle I_3$};
\end{tikzpicture}&& 
\begin{tikzpicture}[scale=0.8]
\draw (0,0) -- (0,-14mm);
\draw (0,-14mm) node[anchor=north]{$\scriptstyle-1$};
\draw (0,0) -- (30:14mm) arc (30:60:14mm)--(0,0);
\draw (0,0) -- (120:14mm) arc (120:150:14mm)--(0,0);
\draw (30:7mm) node[anchor=north west]{$\scriptstyle I_3$};
\draw (150:7mm) node[anchor=north east]{$\scriptstyle I_2$};
\fill[red]  (0,0) circle (0.5mm);
\draw (1cm,-10mm) node{$T_*$};
\end{tikzpicture}
 \\
(a)&\ &(b)
\end{tabular}
\end{center}
\caption{}
\label{fig0}
\end{figure}

There is, however, a more subtle way in which a gluing may fail to  
embed its parts into the whole space. 
To see this, consider another gluing 
of $I_1$, $I_2$ and $I_3$
 depicted on Fig.~\ref{fig}(a) into the 
space $T_\circ$ pictured on Fig.~\ref{fig}(b).
\begin{figure}[!htp]
\begin{center}
\begin{tabular}{ccccc}
\begin{tikzpicture}[scale=0.8]
\draw[dashed,very thin,red] (0,0) circle (5mm);
\draw[color=white,very thick] (50:5mm) arc (50:130:5mm);
\draw[dashed,very thin,red] (50:20mm) arc (50:130:20mm);
\draw (0,-5mm) -- (0,-20mm);
\draw (50:5mm) -- (50:20mm);
\draw (130:5mm) -- (130:20mm);
\fill[red] (0,-5mm) circle (0.5mm);
\fill[red] (50:5mm) circle (0.5mm);
\fill[red] (130:5mm) circle (0.5mm);
\fill[red] (50:20mm) circle (0.5mm);
\fill[red] (130:20mm) circle (0.5mm);
\draw (0,-5mm) node[anchor=north west]{$\scriptstyle1$};
\draw (0,-20mm) node[anchor=west]{$\scriptstyle-1$};
\draw (0,-13mm) node[anchor=east]{$\scriptstyle I_1$};
\draw (130:5mm) node[anchor=east]{$\scriptstyle1$};
\draw (130:20mm) node[anchor=east]{$\scriptstyle-1$};
\draw (130:13mm) node[anchor=north east]{$\scriptstyle I_2$};
\draw (50:5mm) node[anchor=west]{$\scriptstyle1$};
\draw (50:20mm) node[anchor=west]{$\scriptstyle-1$};
\draw (50:13mm) node[anchor=north west]{$\scriptstyle I_3$};
\end{tikzpicture}&& 
\begin{tikzpicture}[scale=0.8]
\draw (0,0) circle (8mm);
\draw (0,-8mm) -- (0,-18mm);
\fill[red]  (0,-8mm) circle (0.5mm);
\fill[red]  (0,8mm) circle (0.5mm);
\draw (0,-18mm) node[anchor=north]{$\scriptstyle-1$};
\draw (0,-8mm) node[anchor=south]{$\scriptstyle1$};
\draw (0,8mm) node[anchor=south]{$\scriptstyle-1$};
\draw (0,-13mm) node[anchor=west]{$\scriptstyle I_1$};
\draw (-8mm,0) node[anchor=east]{$\scriptstyle I_2$};
\draw (8mm,0) node[anchor=west]{$\scriptstyle I_3$};
\draw (1.1cm,-1.1cm) node{$T_\circ$};
\end{tikzpicture}
&&
\begin{tikzpicture}[scale=0.8]
\draw[dashed,very thin,red] (0,0) circle (5mm);
\draw[dashed,very thin,red] (50:20mm) arc (50:130:20mm);
\draw (0,-5mm) -- (0,-20mm);
\draw (50:5mm) -- (50:20mm);
\draw (130:5mm) -- (130:20mm);
\fill[red] (0,-5mm) circle (0.5mm);
\fill[red] (50:5mm) circle (0.5mm);
\fill[red] (130:5mm) circle (0.5mm);
\fill[red] (50:20mm) circle (0.5mm);
\fill[red] (130:20mm) circle (0.5mm);
\draw (0,-5mm) node[anchor=north west]{$\scriptstyle1$};
\draw (0,-20mm) node[anchor=west]{$\scriptstyle-1$};
\draw (0,-13mm) node[anchor=east]{$\scriptstyle I_1$};
\draw (130:5mm) node[anchor=east]{$\scriptstyle1$};
\draw (130:20mm) node[anchor=east]{$\scriptstyle-1$};
\draw (130:13mm) node[anchor=north east]{$\scriptstyle I_2$};
\draw (50:5mm) node[anchor=west]{$\scriptstyle1$};
\draw (50:20mm) node[anchor=west]{$\scriptstyle-1$};
\draw (50:13mm) node[anchor=north west]{$\scriptstyle I_3$};
\end{tikzpicture}
 \\
(a)&\ &(b) &\ & (c)
\end{tabular}
\end{center}
\caption{}
\label{fig}
\end{figure}
All the $I_j$'s are embedded into $T_\circ$ but the partial gluing of $I_2$ and $I_3$ is not. Of course one can define
an alternative  gluing
procedure of $I_j$'s into $T_\circ$ (see Fig.~\ref{fig}(c)) for which all partial gluings are embedded into $T_\circ$.

Let us now consider the problem of gluing from the point of view of 
algebras.
Let $J$ be a finite set, and let
\begin{equation}\label{family}
\{\pi^i_j:B_i\longrightarrow B_{ij}=B_{ji}\}_{i,j\in J,\,i\neq j}
\end{equation} 
be a family of algebra homomorphisms.
\begin{dfn}[\cite{CM2000,GK-P99}]
The \emph{multi-pullback 
algebra} $B^\pi$ of a family~\eqref{family} of algebra homomorphisms
is defined as
\[
B^\pi:=\left\{\left.(b_i)_i\in\prod_{i\in J}B_i\;\right|\;\pi^i_j(b_i)=\pi^j_i(b_j),\;\forall\, i,j\in J,\, i\neq j
\right\}.
\]
\end{dfn}
\begin{dfn}
A family \eqref{family} of algebra homomorphisms is called 
\emph{distributive} if and only if all of them are surjective and their
kernels generate a distributive lattice of ideals.
\end{dfn}
\noindent
The multi-pullback algebra of a distributive family of homomorphisms
is the main mathematical concept of this note, and plays
a key role in~\cite{hkmz11,hkz12,r-j}. In particular, it includes 
the multi-pullbacks of all finite families of $C^*$-epimorphisms
between unital $C^*$-algebras.
In case of commutative unital $C^*$-algebras, 
such a multi-pullback $C^*$-algebra
can be identified with
 the algebra of all continuous functions on the compact Hausdorff space 
obtained by the gluing procedure
described in the abstract applied to compact Hausdorff spaces.

\begin{exm}\label{ex1}
Consider the $C^*$-algebra $C(T_*)$ of all continuous functions on $T_*$ 
as the multi-pullback $C^*$-algebra corresponding to the gluing depicted 
on Fig.~\ref{fig0}(a).
Here we take $B_i=C(I_i)$, $i=1,2,3$, $B_{12}=B_{13}=\mC$, $B_{23}=\mC\oplus\mC$, and define $C^*$-epimorphisms by the formulae
\begin{equation*}
\pi^1_2=\pi^2_1=\pi^1_3=\pi^3_1:f\mapsto f(1), \quad\pi^2_3:f\mapsto(f(-1),f(1)),\quad \pi^3_2:f\mapsto(f(1),f(-1)).
\end{equation*}
The fact that $I_2$ is not embedded in $T_*$ corresponds to the non-surjectivity
of the canonical projection $B^\pi\rightarrow B_2$.
\end{exm}

\begin{exm}\label{ex2}
Consider the $C^*$-algebra $C(T_\circ)$ 
of all continuous functions on $T_\circ$ 
as the multi-pullback $C^*$-algebra corresponding to the gluing depicted 
on Fig.~\ref{fig}(a).
Here we take $B_i=C(I_i)$, $B_{ij}:=\mC$, $1\leq i,j\leq 3$, $i\neq j$,
and define $C^*$-epimorphisms by the formulae
\begin{equation*}
\pi^1_2=\pi^2_1=\pi^1_3=\pi^3_1:f\mapsto f(1),\quad \pi^2_3=\pi^3_2:f\mapsto f(-1).
\end{equation*}
 While  the canonical projections $B^\pi\rightarrow B_i$ are all
 surjective, the canonical projection $B^\pi\rightarrow\{(b_2,b_3)\in B_2\times B_3\;|\;\pi^2_3(b_2)=\pi^3_2(b_3)\}$ is not.
Indeed, a pair $b_2:=(t\mapsto t)$, $b_3:=(t\mapsto -1)$ satisfies
$\pi^2_3(b_2)=\pi^3_2(b_3)$, but there is no function $b_1\in B_1$ such that $(b_1,b_2,b_3)\in B^\pi$.
This corresponds to the fact that the gluing of $I_2$ and $I_3$ is not embedded in $T_\circ$.
\end{exm}

\begin{exm}\label{ex3}
We can present the $C^*$-algebra $C(T_\circ)$ of all continuous functions
on $T_\circ$ pictured in Fig.~\ref{fig}(b) by using different multi-pullbacks: one corresponding to the gluing depicted
in Fig.~\ref{fig}(a) (see Example~\ref{ex2}) and one corresponding to the gluing depicted
in Fig.~\ref{fig}(c). For the latter case,
 we take the $B_i$'s, $B_{12}$, $B_{13}$,
$\pi^1_2$, $\pi^2_1$, $\pi^1_3$, $\pi^3_1$ as in Example~\ref{ex2}, 
but we put $B_{23}:=\mC\oplus\mC$
and $\pi^2_3=\pi^3_2:f\mapsto (f(-1),f(1))$. Now
not only the  canonical projections 
$B^\pi\rightarrow B_i$ are all surjective, but also,
for all distinct $i,j,k$ and 
all $b_i\in B_i$, $b_j\in B_j$ such that 
$\pi^i_j(b_i)=\pi^j_i(b_j)$, there exists $b_k\in B_k$ such that 
$\pi^i_k(b_i)=\pi^k_i(b_k)$ and $\pi^j_k(b_j)=\pi^k_j(b_k)$.
\end{exm}

It turns out that the cocycle condition defined below
is a perfect tool to understand
the differences between the above examples.
To define the cocycle condition, for any distinct $i,j,k$ we put
$B^i_{jk}:=B_i/(\ker\pi^i_j+\ker\pi^i_k)$ and take 
$[\cdot]^i_{jk}:B_i\rightarrow B^i_{jk}$ to be the canonical surjections.
Next, we introduce the family of isomorphisms
\begin{equation}
\pi^{ij}_k:B^i_{jk}\longrightarrow B_i/\pi^i_j(\ker\pi^i_k),\qquad
[b_i]^i_{jk}\longmapsto\pi^i_j(b_i)+\pi^i_j(\ker\pi^i_k).
\end{equation}
Now we are ready for:
\begin{dfn}
\label{cocycle}
\cite[in Proposition~9]{CM2000}
 We say that a  family~\eqref{family} of algebra
epimorphisms
satisfies the {\em cocycle condition}
 if and only if, for all distinct $i,j,k\in J$, 
\begin{enumerate}
\item $\pi^i_j(\ker\pi^i_k)=\pi^j_i(\ker\pi^j_k)$,
\item the isomorphisms $\phi^{ij}_k:=(\pi^{ij}_k)^{-1}\circ\pi^{ji}_k:B^j_{ik}\rightarrow B^i_{jk}$  satisfy 
$\phi^{ik}_j=\phi^{ij}_k\circ\phi^{jk}_i$.
\end{enumerate}
\end{dfn}

It was proven in \cite{CM2000} that if a distributive family
of $\pi^i_j$'s
satisfies the cocycle condition, then the canonical projections
$B^\pi\rightarrow B_i$  are all surjective. 
In particular, the multi-pullback from Example~\ref{ex1} cannot satisfy the cocycle condition.
The multi-pullback presentation of $C(T_\circ)$
from Example~\ref{ex2} demonstrates, however, that the cocycle condition is not necessary for the canonical projections
$B^\pi\rightarrow B_i$  to be  surjective. Indeed,
they are all
clearly surjective in this case, but 
$\pi^i_j$'s do not satisfy the cocycle condition
because $\pi^1_2(\ker\pi^1_3)=\{0\}$ whereas
$\pi^2_1(\ker\pi^2_3)=\mC$.
On the other hand, the cocycle condition is satisfied by an alternative multi-pullback presentation  of $C(T_\circ)$
given in Example~\ref{ex3}.
This suggests that the cocycle condition is related to the possibility of extending partial
multi-pullbacks. Thus we arrive at the main result of this note:

\begin{thm}
\label{gen}
The following statements about a distributive family~\eqref{family} of  
algebra homomorphisms are equivalent:
\begin{enumerate}
\item The family~\eqref{family} satisfies the
cocycle condition.
\item For any $K\subsetneq J$,  $k\in J\setminus K$
 and  $(b_l)_{l\in K}\in\prod_{l\in K}B_l$
such that $\pi^i_j(b_i)=\pi^j_i(b_j)$ for all distinct $i,j\in K$, 
there exists $b_k\in B_k$ such that
$\pi^l_k(b_l)=\pi^k_l(b_k)$ for all $l\in K$.
\item For all distinct $i,j,k\in J$ and all
 $b_i\in B_i$, $b_j\in B_j$ such that 
$\pi^i_j(b_i)=\pi^j_i(b_j)$, there exists $b_k\in B_k$ such that also 
$\pi^i_k(b_i)=\pi^k_i(b_k)$ and $\pi^j_k(b_j)=\pi^k_j(b_k)$.
\end{enumerate}
\end{thm}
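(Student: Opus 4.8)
The plan is to split the three conditions into a local equivalence $(1)\Leftrightarrow(3)$, which concerns a single triple $\{i,j,k\}$ where the cocycle data live, and a global equivalence $(2)\Leftrightarrow(3)$, where the distributivity of the lattice of ideals does the real work; the implication $(2)\Rightarrow(3)$ is immediate on specializing to $K=\{i,j\}$. The basic tool, used throughout, is the unconditional fact that for ideals $I,I'$ of an algebra $A$ the map $A\to A/I\times A/I'$ has image the fiber product $\{(u,v):u\equiv v\bmod(I+I')\}$, that is, $A/(I\cap I')$ is this fiber product. Taking $A=B_k$, $I=\ker\pi^k_i$, $I'=\ker\pi^k_j$ shows that a $b_k$ realizing prescribed images $\pi^i_k(b_i)$ and $\pi^j_k(b_j)$ exists if and only if these two elements have equal image in $B^k_{ij}$; hence $(3)$ for the triple $(i,j,k)$ is equivalent to the statement that for every compatible pair $(b_i,b_j)$ the images of $\pi^i_k(b_i)$ and $\pi^j_k(b_j)$ agree in $B^k_{ij}$.

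For $(3)\Rightarrow(2)$ I will first prove, by induction on $|K|$, a \emph{sheaf lemma}: if ideals $\{I_l\}_{l\in K}$ of an algebra $A$ generate a distributive lattice, then any family $(x_l)_{l\in K}$ satisfying $x_l\equiv x_{l'}\bmod(I_l+I_{l'})$ for all $l,l'$ is realized by a single $x\in A$. The base case $|K|=2$ is the fiber-product fact above; for the inductive step, fix $m\in K$, realize $(x_l)_{l\ne m}$ by some $y$ using the inductive hypothesis, and then correct $y$ by applying the fiber-product fact to the ideals $\bigcap_{l\ne m}I_l$ and $I_m$, which is possible precisely because the distributive identity $\big(\bigcap_{l\ne m}I_l\big)+I_m=\bigcap_{l\ne m}(I_l+I_m)$ places $x_m-y$ in the required sum. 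Applying this with $A=B_k$ and $I_l=\ker\pi^k_l$ (a distributive lattice, being a sublattice of the given one), the $b_k$ demanded by $(2)$ exists as soon as, for each pair $l,l'\in K$, the images of $\pi^l_k(b_l)$ and $\pi^{l'}_k(b_{l'})$ agree in $B^k_{ll'}$; and this is exactly $(3)$ for the triple $(l,l',k)$ applied to the compatible pair $(b_l,b_{l'})$, hence holds by hypothesis.

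It remains to prove the local equivalence $(1)\Leftrightarrow(3)$. For $(3)\Rightarrow(1)$, part (1) of the cocycle condition comes cheaply: given $x\in\ker\pi^i_k$, applying $(3)$ to the compatible pair $(x,0)\in B_i\times B_k$ with target $j$ yields $y\in B_j$ with $\pi^j_k(y)=0$ and $\pi^j_i(y)=\pi^i_j(x)$, so $\pi^i_j(\ker\pi^i_k)\subseteq\pi^j_i(\ker\pi^j_k)$, and symmetry gives equality. With part (1) available, the surjection $B_i\xrightarrow{\pi^i_k}B_{ik}\to B^k_{ij}$ descends to an isomorphism $B^i_{jk}\xrightarrow{\sim}B^k_{ij}$ that I will identify with $\phi^{ki}_j$, while the relation $\pi^i_j(b_i)=\pi^j_i(b_j)$ reads as $[b_i]^i_{jk}=\phi^{ij}_k([b_j]^j_{ik})$. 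Feeding these identifications into the criterion from the first paragraph converts the required agreement in $B^k_{ij}$ into $\phi^{ki}_j\circ\phi^{ij}_k=\phi^{kj}_i$, which is the cocycle relation (2) under the relabeling $(i,j,k)\mapsto(k,i,j)$; reading the same computation backwards proves $(1)\Rightarrow(3)$.

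The main obstacle I anticipate is the bookkeeping in $(1)\Leftrightarrow(3)$. The sheaf lemma is the technical heart and the sole place where distributivity is indispensable, yet its induction is clean once the distributive identity is isolated; by contrast, one must carefully pin down the several canonical isomorphisms among $B^i_{jk}$, $B^j_{ik}$ and $B^k_{ij}$, confirm that the map induced by $\pi^i_k$ is genuinely $\phi^{ki}_j$ and not its inverse, and verify that the relabeling transports the agreement in $B^k_{ij}$ onto exactly the stated cocycle relation. Keeping the symmetric roles of $i$, $j$, $k$ straight is where an argument of this type most easily slips.
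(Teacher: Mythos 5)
Your proposal is correct, but it takes a genuinely different and more self-contained route than the paper. The paper closes a single loop $(1)\Rightarrow(2)\Rightarrow(3)\Rightarrow(1)$: the hard globalization step $(1)\Rightarrow(2)$ is not proved there at all but cited as essentially the proof of \cite[Proposition~9]{CM2000}; $(2)\Rightarrow(3)$ is the same trivial specialization you use; and $(3)\Rightarrow(1)$ is an element chase --- first the specialization $b_j=0$ for part (1) of the cocycle condition (identical to yours up to naming of indices), then, for the cocycle identity, lifting an arbitrary $b_j$ to some $b_k$ by surjectivity, invoking $(3)$ to produce $b_i$, and comparing the resulting classes through the equivalence~\eqref{eqphi}, which carries the same content as your fiber-product criterion. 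You, by contrast, prove the globalization as $(3)\Rightarrow(2)$ from scratch: your sheaf lemma (a Chinese-remainder statement for a distributive lattice of ideals, proved by induction via $\bigl(\bigcap_{l\neq m}I_l\bigr)+I_m=\bigcap_{l\neq m}(I_l+I_m)$) is precisely the ingredient the paper outsources to the reference; and you establish the local equivalence $(1)\Leftrightarrow(3)$ bidirectionally at once, by identifying the map $B^i_{jk}\to B^k_{ij}$ descended from $\pi^i_k$ with $\phi^{ki}_j$ --- that identification, and the relabeling turning $\phi^{kj}_i=\phi^{ki}_j\circ\phi^{ij}_k$ into the stated cocycle identity, do check out. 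Your route buys self-containedness, a sharp isolation of where distributivity enters (only the sheaf lemma; the local equivalence needs only surjectivity, consistent with the paper, whose $(3)\Rightarrow(1)$ also never invokes distributivity), and an implication $(1)\Rightarrow(3)$ that does not pass through $(2)$. The paper's route buys brevity, and its \eqref{eqphi}-based chase avoids your main bookkeeping burden of deciding whether a descended map is $\phi^{ki}_j$ or its inverse. One point to make explicit in a write-up: to obtain the cocycle identity on all of $B^j_{ik}$ from $(3)$, an arbitrary class $[b_j]^j_{ik}$ must be realized by a compatible pair $(b_i,b_j)$, which uses surjectivity of $\pi^i_j$; this is exactly the step the paper performs when it first lifts $b_j$ through the surjection $\pi^k_j$.
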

\begin{proof}
The proof of $(1)\Rightarrow (2)$ is essentially identical with the proof
of \cite[Proposition~9]{CM2000}, and $(3)$ is obviously a 
special case  of $(2)$.
In order to prove $(3)\Rightarrow (1)$ and close the loop of implications,
assume that for any distinct $i,j,k\in J$ and for arbitrary elements $b_i\in B_i$ and $b_j\in B_j$ such that 
$\pi^i_j(b_i)=\pi^j_i(b_j)$ there exists $b_k\in B_k$ such that also $\pi^i_k(b_i)=\pi^k_i(b_k)$ and $\pi^j_k(b_j)=\pi^k_j(b_k)$.
Specializing this condition for $b_j=0$ yields that for any $b_i\in\ker\pi^i_j$ there exists a
$b_k\in\ker\pi^k_j$ such that $\pi^i_k(b_i)=\pi^k_i(b_k)$, that is
 $\pi^i_k(\ker\pi^i_j)\subseteq\pi^k_i(\ker\pi^k_j)$.
Exchanging $i$ and $k$ we obtain the set equality. This proves
Condition~(1) defining the cocycle condition.

To prove the second condition observe  that, for all
distinct $i,j,k\in J$ and any $b_i\in B_i$, $b_j\in B_j$,
\begin{equation}
\label{eqphi}
[b_i]^i_{jk}=\phi^{ij}_k([b_j]^j_{ik})\;\Leftrightarrow\;
\pi^{ji}_k([b_j]^j_{ik})=\pi^{ij}_k([b_i]^{i}_{jk})
\;\Leftrightarrow\;
\pi^i_j(b_i)-\pi^j_i(b_j)\in\pi^i_j(\ker\pi^i_k).
\end{equation}
Now let us pick any distinct $i,j,k\in J$   and any $b_j\in B_j$.
Since $\pi^k_j$ is surjective, there exists $b_k\in B_k$ such that
$\pi^k_j(b_k)=\pi^j_k(b_j)$, so that 
$[b_k]^k_{ji}=\phi^{kj}_i([b_j]^j_{ik})$ by~\eqref{eqphi}. 
Furthermore, by assumption, there exists
$b_i\in B_i$ such that $\pi^i_k(b_i)=\pi^k_i(b_k)$ and $\pi^i_j(b_i)=\pi^j_i(b_j)$. Therefore, again by~\eqref{eqphi}, we obtain
\begin{equation}
[b_i]^i_{jk}=\phi^{ik}_j([b_k]^k_{ji})=\phi^{ik}_j(\phi^{kj}_i([b_j]^j_{ik})) \quad\text{and}\quad
[b_i]^i_{jk}=\phi^{ij}_k([b_j]^j_{ik}).
\end{equation}
Plugging in the second equality to the first one, we get
$\phi^{ij}_k([b_j]^j_{ik})=\phi^{ik}_j(\phi^{kj}_i([b_j]^j_{ik}))$
for any $[b_j]^j_{ik}\in B^j_{ik}$, as needed.
\end{proof}

Finally, let us remark that the fact that in Example~\ref{ex3} we
could remedy the lack of the cocycle condition in Example~\ref{ex2}
is not a coincidence. Indeed, following~\cite[Proposition~4]{CM2000}, 
one sees that, if $B^\pi$ is the 
 multi-pullback  a family~\eqref{family}
such that the canonical projections 
$B^\pi\rightarrow B_i$ are all surjective
 and their kernels generate a distributive lattice of ideals, then 
$B^\pi$ can  also be presented
as the multi-pullback of a family satisfying the cocycle condition 
even if the original family failed to do so. Herein the new family is
defined via  the  canonical surjections:
\begin{equation}
 \{\pi^{'i}_j:B_i\cong B^\pi/\ker(B^\pi\rightarrow B_i) 
\longrightarrow 
B^\pi/
(\ker(B^\pi\rightarrow B_i)+
\ker(B^\pi\rightarrow B_j))\}_{i,j\in J,\,i\neq j}.
\end{equation}
The aforementioned example is a special case of this general claim because
$C^*$-ideals always generate a distributive lattice.

\noindent\textbf{Acknowledgements:}
This work is part of the project \emph{Geometry and Symmetry of Quantum
Spaces} sponsored by the grants 
PIRSES-GA-2008-230836 and 
1261/7.PR~UE/2009/7.

\end{document}